\theoremstyle{plain}
\newtheorem{theorem}{Theorem}%[section]
\newtheorem{lemma}[theorem]{Lemma}
\newtheorem{proposition}[theorem]{Proposition}
\theoremstyle{definition}
\newtheorem{remark}[theorem]{Remark}
\newcommand\bG{{\mathbb G}}
\newcommand\bP{{\mathbb P}}
\newcommand\bQ{{\mathbb Q}}
\newcommand\bZ{{\mathbb Z}}
\newcommand\cD{{\mathcal D}}
\newcommand\gp{{\rm gp}}
\newcommand\id{{\rm id}}
\newcommand\spl{{\rm split}}
\newcommand\End{{\rm End}}
\newcommand\Gal{{\rm Gal}}
\newcommand\GL{{\rm GL}}
\newcommand\Hom{{\rm Hom}}
\newcommand\Spec{{\rm Spec}}
\newcommand\Stab{{\rm Stab}}
\numberwithin{equation}{section}
\title{Groupes alg\'ebriques tr\`es sp\'eciaux/Very special algebraic groups}
\author{Michel Brion and Emmanuel Peyre}
\date{}
\begin{document}

\bigskip \noindent
{\sc Rubrique~:} G\'eom\'etrie alg\'ebrique.

\bigskip \bigskip

\maketitle

\bigskip \noindent
{\sc R\'esum\'e.} Nous disons qu'un groupe alg\'ebrique lisse $G$ sur un corps 
$k$ est tr\`es sp\'ecial si pour toute extension de corps $K/k$, toute $K$-vari\'et\'e
homog\`ene sous $G_K$ a un point $K$-rationnel. On sait que tout groupe
lin\'eaire r\'esoluble scind\'e est tr\`es sp\'ecial. Dans cette note, nous obtenons
la r\'eciproque et nous discutons ses relations avec la classification birationnelle
des actions de groupes alg\'ebriques.

\bigskip \noindent
{\sc Abstract.} We say that a smooth algebraic group $G$ over a field $k$ 
is very special if for any field extension $K/k$, every $G_K$-homogeneous
$K$-variety has a $K$-rational point. It is known that every  
split solvable linear algebraic group is very special.
In this note, we show that the converse holds, and discuss its
relationship with the birational classification of algebraic group actions.

\section{Introduction}
\label{sec:int}

Consider a smooth algebraic group $G$ over a field $k$,
and a $G$-variety $X$. By a theorem of Rosenlicht, there exist 
a dense open $G$-stable subset $X_0 \subset X$ and a morphism
$f: X_0 \to Y$, such that the fiber of $f$ at any point $x \in X_0$
is the orbit of $x$; moreover, $f$ identifies the function field
of $Y$ with the field of $G$-invariant rational functions on $X$
(see \cite[Thm.~2]{Rosenlicht-I}, and \cite[Sec.~7]{BGR} 
for a modern proof). We say that the rational map 
$f : X \dasharrow Y$ is the rational quotient of $X$ by $G$.

From this, one readily derives a birational classification of 
$G$-varieties with prescribed invariant function field. To state it, 
we introduce some notation. 
Given a finitely generated field extension $K/k$, we consider pairs 
$(X,\iota)$, where $X$ is a $G$-variety and 
$\iota: K \stackrel{\simeq}{\longrightarrow} k(X)^G$ 
is an isomorphism of fields over $k$. We say that
two pairs $(X,\iota)$ and $(X',\iota')$ are equivalent, if
there exists a $G$-equivariant birational isomorphism
$\varphi : X \dasharrow X'$ such that the isomorphism
$\varphi^* : k(X')^G  \stackrel{\simeq}{\longrightarrow} k(X)^G$ 
satisfies $\varphi^* \circ \iota' = \iota$. We may now state:

\begin{proposition}\label{prop:bir}
There is a one-to-one correspondence between equivalence
classes of pairs $(X,\iota)$ as above, and isomorphism classes
of $G_K$-homogeneous $K$-varieties. 
\end{proposition}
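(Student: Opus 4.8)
The plan is to realize the correspondence through Rosenlicht's rational quotient, sending a pair $(X,\iota)$ to the generic fibre of its rational quotient regarded as a variety over $K$. First I would construct the forward map. Given $(X,\iota)$, Rosenlicht's theorem furnishes a dense open $G$-stable $X_0 \subseteq X$ and a morphism $f : X_0 \to Y$ whose fibres are the $G$-orbits and with $k(Y) = k(X)^G$. Writing $\eta$ for the generic point of $Y$, the generic fibre $X_\eta$ is a single $G$-orbit over the residue field $k(\eta) = k(Y)$, hence a $G_{k(Y)}$-homogeneous variety; composing with $\iota$ to identify $k(Y)$ with $K$ turns $X_\eta$ into a $G_K$-homogeneous $K$-variety. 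To see this is well defined on equivalence classes, note that a $G$-equivariant birational isomorphism $\varphi : X \dasharrow X'$ with $\varphi^* \circ \iota' = \iota$ restricts to an isomorphism of generic fibres over $k(X)^G = k(X')^G$, which the compatibility condition makes a $K$-isomorphism; so equivalent pairs produce isomorphic homogeneous varieties.

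For surjectivity I would run the construction backwards. Given a $G_K$-homogeneous $K$-variety $Z$, use that $K/k$ is finitely generated to pick a $k$-variety $Y$ with $k(Y) \simeq K$, so that $Z$ is a variety over the generic point $\Spec k(Y)$ of $Y$. A spreading-out argument then produces a dense open $U \subseteq Y$, a $G$-variety $X$, and a $G$-invariant morphism $X \to U$ of finite type whose generic fibre is $Z$; shrinking $U$ further lets one spread out the action morphism together with its axioms and the homogeneity. Since the generic fibre is a single orbit, $X \to U$ is a rational quotient, whence $k(X)^G = k(U) = K$; taking $\iota$ to be the resulting identification, the forward map sends $(X,\iota)$ back to $Z$.

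Injectivity follows by the same spreading-out principle. If $(X,\iota)$ and $(X',\iota')$ have $G_K$-isomorphic generic fibres, then, after identifying $k(Y)$ and $k(Y')$ with $K$ via $\iota$ and $\iota'$, this isomorphism is defined over the generic point of a common model of the birational varieties $Y$ and $Y'$; it therefore extends to a $G$-equivariant isomorphism over some dense open of that model, i.e.\ to a $G$-equivariant birational isomorphism $\varphi : X \dasharrow X'$, which by construction satisfies $\varphi^* \circ \iota' = \iota$. Hence the two pairs are equivalent, and the forward map is a bijection.

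The main obstacle is the spreading-out step used in both the surjectivity and injectivity arguments: one must pass from data over the generic point $\Spec K$ --- a homogeneous $K$-variety in one case, a $K$-isomorphism in the other --- to data over a dense open of a $k$-model of $K$, all the while preserving the $G$-action, its equivariance and the homogeneity (see EGA~IV, \S8). A secondary point requiring care is checking that $k(X)^G$ is exactly $K$ rather than a larger field, which is where homogeneity of the generic fibre, via Rosenlicht's theorem, is essential.
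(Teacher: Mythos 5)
Your overall correspondence is the one the paper uses (the forward map sends $(X,\iota)$ to the generic fibre of the Rosenlicht quotient), and your surjectivity argument via spreading out (EGA~IV, \S 8) is a legitimate alternative to the paper's construction, which instead embeds $Z$ in $\bP^n_K$, takes its closure in $\bP^n_Y$, and invokes Weil's regularization theorem to replace the resulting \emph{rational} $G$-action by a regular one on a birational model. Spreading out the action directly is arguably cleaner and avoids Weil's theorem altogether, so that part of your proposal is sound.

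The genuine gap is in the well-definedness of the forward map. You assert that a $G$-equivariant birational isomorphism $\varphi : X \dasharrow X'$ with $\varphi^* \circ \iota' = \iota$ ``restricts to an isomorphism of generic fibres.'' A priori it only induces a $G_K$-equivariant \emph{birational} map $Z \dasharrow Z'$: $\varphi$ is defined only on a dense open subset of $X$, so its restriction to the generic fibre is defined only on a dense open subset of $Z$, and nothing formal says this rational map is an isomorphism, or even a morphism. Upgrading it to an isomorphism is exactly the key lemma the paper imports from Demazure (\cite[Lem.~4]{Demazure}, extended by Galois descent from homogeneous spaces to homogeneous varieties without rational points): every $G_K$-equivariant birational isomorphism between $G_K$-homogeneous varieties is an isomorphism. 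Its proof is not a triviality --- one needs that the maximal domain of definition of an equivariant rational map is $G_K$-stable, that a homogeneous variety has no proper dense open $G_K$-stable subvariety (so the map and its inverse are everywhere defined), and separatedness to see the composites are the identity. The same point is silently used again in your surjectivity step: to check that the forward map returns $Z$, you must identify the generic fibre of your spread-out family $X \to U$ with the generic fibre of a Rosenlicht quotient $X_0 \to Y_0$ chosen on some dense open $X_0 \subseteq X$, and that identification rests on the same homogeneity argument. You should also note that well-definedness requires independence of the choice of the open set $X_0$ (not only of the representative $(X,\iota)$), which the paper treats explicitly with the same lemma. Until this lemma is stated and proved (or cited), the map is not known to be well defined on equivalence classes and the bijectivity check does not close.
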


This easy result (which is implicitly known, see e.g. \cite[\S 2.7]{PV})
motivates the consideration of those smooth algebraic groups
for which all rational quotients have rational sections. 
These are described as follows:

\begin{theorem}\label{thm:vs}
The following conditions are equivalent for a smooth
algebraic group $G$:

\begin{enumerate}

\item[{\rm (i)}] For any $G$-variety $X$, the rational quotient
$f: X \dasharrow Y$ has a rational section.

\item[{\rm (ii)}] For any field extension $K/k$, every
$G_K$-homogeneous $K$-variety has a $K$-rational point.

\item[{\rm (iii)}] $G$ has a composition series with quotients
isomorphic to $\bG_a$ or $\bG_m$.

\end{enumerate}

\end{theorem}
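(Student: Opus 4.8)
The plan is to establish $\mathrm{(i)}\Leftrightarrow\mathrm{(ii)}$ formally from Proposition~\ref{prop:bir}, to prove $\mathrm{(iii)}\Rightarrow\mathrm{(ii)}$ by d\'evissage, and to attack the substantive converse $\mathrm{(ii)}\Rightarrow\mathrm{(iii)}$ by a chain of reductions to one hard case. For $\mathrm{(i)}\Leftrightarrow\mathrm{(ii)}$, I would note that the two conditions are two readings of the same correspondence. Given a $G$-variety $X$ with rational quotient $f\colon X\dasharrow Y$, put $K=k(Y)=k(X)^G$; the generic fibre $X_\eta$ is a $G_K$-homogeneous $K$-variety, and a rational section of $f$ is exactly a $K$-point of $X_\eta$. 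Conversely, by Proposition~\ref{prop:bir} every $G_K$-homogeneous variety arises as such a generic fibre when $K/k$ is finitely generated, and a standard spreading-out argument reduces an arbitrary extension in $\mathrm{(ii)}$ to a finitely generated subextension over which the space and its action are already defined.

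For $\mathrm{(iii)}\Rightarrow\mathrm{(ii)}$ I would use d\'evissage along the composition series. Writing $1\to N\to G\to Q\to1$ with $N\cong\bG_a$ or $\bG_m$ normal and $Q$ carrying a shorter series, let $Z$ be $G_K$-homogeneous. Since $N$ is normal the quotient $Z\to Z/N$ exists, $Z/N$ is $Q_K$-homogeneous, and by induction it has a $K$-point $\bar y$; its fibre is an $N_K$-homogeneous space and yields a $K$-point of $Z$ once the base case is settled. For $N=\bG_a$ or $\bG_m$, commutativity forces the stabilizer of every point to be the kernel $H$ of the action, so $Z$ is a torsor under $N/H$, a quotient of $\bG_a$ or $\bG_m$; such a quotient is again $\bG_a$ (resp. $\bG_m$) or trivial, and $H^1(K,\bG_a)=H^1(K,\bG_m)=0$ by vanishing of additive cohomology and Hilbert~90, so the torsor is trivial.

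The heart is $\mathrm{(ii)}\Rightarrow\mathrm{(iii)}$, and its linchpin is that condition $\mathrm{(ii)}$ passes to quotients: if $G\twoheadrightarrow G'$ and $Z$ is $G'_K$-homogeneous, then $Z$ is $G_K$-homogeneous through the surjection, hence has a $K$-point. Thus every quotient of a very special group is \emph{special}, i.e. has trivial $H^1(K,-)$ for all $K$ (torsors being homogeneous spaces). Applying this to $\pi_0(G)$ (a nontrivial finite group has nontrivial torsors) shows $G$ is connected; applying it to the abelian-variety quotient $G/G_{\mathrm{aff}}$ furnished by Chevalley's theorem (a nontrivial abelian variety has nontrivial torsors) shows $G$ is affine. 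It then suffices to prove that $G$ is solvable, for a connected solvable special group is $k$-split solvable---its maximal torus and unipotent radical being $k$-split, since non-split tori and $k$-wound unipotent groups are not special---which is exactly $\mathrm{(iii)}$.

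Forcing solvability is the step I expect to be the main obstacle. I would consider the adjoint group $\bar G$ of the reductive quotient $G/R_u(G)$: it is a quotient of $G$, hence very special, hence special. The crux is then that \emph{no nontrivial adjoint semisimple group is special}; granting this, $\bar G=1$, the reductive quotient is a torus, and $G$ is solvable. For this input I would invoke Grothendieck's classification, by which the only special semisimple groups are products of $\SL_n$ and $\mathrm{Sp}_{2n}$, none adjoint unless trivial. What is hidden here is precisely what makes \emph{very} special strictly stronger than special: the relevant point-free spaces are not torsors but forms of $G/Z(G)$, with obstruction in $H^2$ rather than $H^1$. Already for $G=\SL_2$ the form of $\SL_2/\mu_2\cong\PGL_2$ attached to a nonsplit quaternion algebra over a suitable $K$ is a $G_K$-homogeneous space with no $K$-point, its obstruction living in $H^2(K,\mu_2)=\mathrm{Br}(K)[2]$. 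Establishing this failure for every nontrivial semisimple group---whether through Grothendieck's theorem or through such an explicit central obstruction, type by type---is the essential difficulty; once it is in hand, the reductions above complete the proof that $\mathrm{(ii)}$ implies $\mathrm{(iii)}$.
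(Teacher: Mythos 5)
Your handling of (i)$\Leftrightarrow$(ii), of (iii)$\Rightarrow$(ii) by d\'evissage, and of the reduction of (ii)$\Rightarrow$(iii) to the solvable case via Grothendieck's classification of special semisimple groups all match the paper's argument (Lemmas \ref{lem:char} and \ref{lem:str}), up to a slip in the d\'evissage: a composition series as in (iii) provides a normal subgroup $N\subset G$ with $G/N\simeq\bG_a$ or $\bG_m$, not a normal copy of $\bG_a$ or $\bG_m$ inside $G$, so the induction must be run with the base case at the top quotient. But your final step contains a genuine error, and it sits exactly where the paper's real work lies. You claim that a connected solvable special group is $k$-split because ``non-split tori \dots are not special.'' This is false: the Weil restriction $T=\Res_{L/k}(\bG_m)$ for a nontrivial finite separable extension $L/k$ is special --- by Shapiro's lemma and Hilbert 90, $H^1(K,T_K)=0$ for every extension $K/k$ --- yet it is not split. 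More generally, by Colliot-Th\'el\`ene's criterion a torus is special if and only if its character lattice is an invertible $\Gamma$-module, and nontrivial permutation lattices give abundant non-split special tori. The statement you actually need is that a \emph{very} special torus is split (Lemma \ref{lem:spl}), and this is the technical heart of the paper: one first shows (Lemma \ref{lem:vst}) that very-specialness of $T$ forces \emph{every} submodule of the character lattice, over \emph{every} finite-index subgroup of $\Gamma$, to be invertible, and then a delicate integral-representation argument (localization at $p$, the Krull--Schmidt theorem for $\bZ_{(p)}C_p$-modules via Jones, and a rank count producing a submodule of rank $p-1$) forces the Galois action to be trivial. None of this can be replaced by the false assertion ``special $\Rightarrow$ split'' for tori.

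A second, related gap: specialness and very-specialness pass to quotients, not to subgroups, so to exploit any splitness criterion for the maximal torus $T$ you must first exhibit $T$ as a \emph{quotient} of $G$. Over a perfect field this is immediate from $G=U\rtimes T$, but over an imperfect field that decomposition can fail, and the paper needs the whole second half of Section \ref{sec:compl}: the exact sequence $0\to M\to G/\cD(G)\to V\to 0$, Nguyen's theorem that the special unipotent quotient $V$ is split, Conrad's lemma to split this sequence and identify $G/\cD(G)\simeq T\times V$, and then Conrad's theory of the maximal split subgroup $U_{\spl}$ (normal in $G$, with $T$ acting trivially on $U/U_{\spl}$) to conclude that $U$ itself is split. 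Your sketch ignores the imperfect-field case entirely. Two smaller points: the existence of the quotient $Z\to Z/N$ for a homogeneous variety with no rational point requires a Galois-descent argument (the paper gives it in Lemma \ref{lem:char}(ii)), and your claim that a nontrivial abelian variety acquires nontrivial torsors after some field extension is true but is itself a theorem (Lang--Tate); the paper sidesteps it by quoting Serre's result that special groups are connected and linear.
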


The equivalence (i)$\Leftrightarrow$(ii) follows readily from
Rosenlicht's theorem on rational quotients. The implication 
(iii)$\Rightarrow$(ii) is also due to Rosenlicht (see 
\cite[Thm.~10]{Rosenlicht-I}). The proof of the converse 
implication is the main contribution of this note.

The algebraic groups satisfying (iii) are exactly the split solvable
linear algebraic groups in the sense of \cite[Def.~6.33]{Milne}.
On the other hand, (ii) obviously implies that for any field
extension $K/k$, every $G$-torsor over $\Spec(K)$ is trivial.
By \cite{RT}, this is equivalent to $G$ being special as defined 
by Serre in \cite{Serre}, that is, every locally isotrivial 
$G$-torsor over a variety is Zariski locally trivial. For this reason, 
we will call \emph{very special} the algebraic groups satisfying (ii).

In fact, every split solvable linear algebraic group $G$ satisfies
a much stronger condition: for any field extension $K/k$,
every $G_K$-homogeneous variety is rational (as follows from
\cite[Thm.~5]{Rosenlicht-II}). Equivalently, the field extension
$k(X)/k(X)^G$ is purely transcendental for any $G$-variety $X$.
This yields a further characterization of very special groups. 

One may also consider algebraic groups $G$ that are possibly
non-smooth, and require that for any field extension $K/k$, 
every $G_K$-homogeneous $K$-scheme has a $K$-rational point
(where a scheme $X$ equipped with an action $a$ of $G$ is
said to be homogeneous if the morphism 
$\id \times a : G \times X \to X \times X$ is faithfully
flat). But the result is unchanged, since every $G$-torsor
over $\Spec(K)$ is trivial, and hence $G$ is smooth in view
of \cite[Prop.~2.3]{RT}.

This note is organized as follows. The proof of Proposition
\ref{prop:bir} is presented in Section \ref{sec:pp}. The 
implications (i)$\Leftrightarrow$(ii)$\Leftarrow$(iii) are proved
in Section \ref{sec:pt}, which also makes the first steps in
the proof of (ii)$\Rightarrow$(iii). In Section \ref{sec:vst},
we show that any very special torus is split. Together with
the fact that any special unipotent group is split
(see \cite[Thm.~1.1]{Nguyen}), this enables us to complete 
the proof of (ii)$\Rightarrow$(iii) in Section \ref{sec:compl}.

\medskip

\noindent
{\bf Notation and conventions.}
We fix a ground field $k$ and choose an algebraic closure $\bar{k}$. 
We denote by $k_s$ the separable closure of $k$ in $\bar{k}$,
and by $\Gamma_k$ the Galois group of $k_s/k$. Given a field
extension $K/k$ and a $k$-scheme $X$, we denote by $X_K$ the
$K$-scheme $X \times_{\Spec(k)} \Spec(K)$.

A \emph{variety} is an integral separated $k$-scheme of finite type.
An \emph{algebraic group} $G$ is a $k$-group scheme of finite type.
We say that $G$ is \emph{linear} if it is smooth and affine.

Given an algebraic group $G$, a $G$-\emph{variety} is a variety
$X$ equipped with a $G$-action,
\[ a : G \times X \longrightarrow X, 
\quad (g,x) \longmapsto g \cdot x. \]

We say that a $G$-variety $X$ is $G$-\emph{homogeneous} 
if $G$ is smooth, $X$ is geometrically reduced, 
and the morphism 
\[ \id \times a : G \times X \longrightarrow X \times X,
\quad (g,x) \longmapsto (x, g \cdot x) \]
is surjective. If in addition $X$ is equipped with a $k$-rational point 
$x$, then we say that $X$ is a $G$-\emph{homogeneous space};
then $X \simeq G/\Stab_G(x)$, where $\Stab_G(x)$ denotes the
stabilizer. 

Every homogeneous space is smooth and quasi-projective; thus, 
so is every homogeneous variety.

\section{Proof of Proposition \ref{prop:bir}}
\label{sec:pp}

Consider a pair $(X,\iota)$ and choose a dense open $G$-stable
subset $X_0 \subset X$ with quotient $f : X_0 \to Y$ as in 
Rosenlicht's theorem. Identifying $k(Y)$ with $K$ via $\iota$,
the generic fiber of $f$ is a $G_K$-homogeneous $K$-variety, say 
$Z_0$. If we replace $X_0$ with an open subset $X_1$ satisfying
the same properties, then $Z_0$ is replaced with another 
$G_K$-homogeneous $K$-variety $Z_1$, which is 
$G_K$-equivariantly birationally isomorphic to $Z_0$.
But every $G_K$-equivariant birational isomorphism
$Z_0 \dasharrow Z_1$ is an isomorphism: this is proved
in \cite[Lem.~4]{Demazure} for homogeneous spaces, and the
general case follows by Galois descent. So we obtain a
$G_K$-homogeneous $K$-variety $Z$; moreover, replacing
$(X,\iota)$ with an equivalent pair replaces $Z$ with
a $G_K$-equivariantly isomorphic variety.

Conversely, consider a $G_K$-homogeneous $K$-variety $Z$.
We may then choose an immersion of $Z$ in some projective 
space $\bP^n_K$. Also, choose a $k$-variety $Y$ with function field 
$K$ and consider the closure $W$ of $Z$ in $\bP^n_Y$. 
Then $W$ is a $k$-variety equipped
with a $k$-morphism $f: W \to Y$. The action map
$a : G_K \times_{\Spec(K)} Z \to Z$ is identified with 
a morphism $G \times_{\Spec(k)} Z \to Z$, which yields 
a rational action of $G$ on $W$ (since $W$ and $Z$ have
the same function field). By construction, the field of 
invariant rational functions on $W$ is identified with $K$.
We now use Weil's regularization theorem (see the main
result of \cite{Weil} and \cite[Thm.~1]{Rosenlicht-I}): $W$
is $G$-birationally isomorphic to a $G$-variety $X$.
This associates with $Z$ a pair $(X,\iota)$, unique up
to equivalence.

One may readily check that the two constructions above
are mutually inverse, by using again the fact that every
equivariant birational isomorphism between homogeneous
varieties is an isomorphism.

\section{Proof of Theorem \ref{thm:vs}: first steps}
\label{sec:pt}

We first show the equivalence (i)$\Leftrightarrow$(ii).
Under the correspondence described in the proof of 
Proposition \ref{prop:bir}, the rational sections
of $f: X \dasharrow Y$ correspond to the $K$-points
of the associated $G_K$-homogeneous variety. Thus, 
(i) is equivalent to the assertion that (ii) holds
for any finitely generated field extension of $k$.
Given an arbitrary field extension $K/k$ and a
$G_K$-homogeneous variety $Z$, there exist 
a finitely generated subextension $L/k$ and a
$G_L$-homogeneous variety $W$ such that $W_L \simeq Z$.
Then every $L$-rational point of $W$ yields 
a $K$-rational point of $Z$; this completes the proof.

To show the equivalence (ii)$\Leftrightarrow$(iii),
we begin with some easy observations. First, if $G$ is 
very special, then $G_K$ is a very special $K$-group 
for any field extension $K/k$. Further properties are gathered
in the following:

\begin{lemma}\label{lem:char}
Consider an exact sequence of algebraic groups
\[ 1 \longrightarrow N \longrightarrow G \longrightarrow Q 
\longrightarrow 1. 
\]

\begin{enumerate}

\item[{\rm (i)}] If $G$ is very special, then so is $Q$.

\item[{\rm (ii)}] If $N$ and $Q$ are very special, then so is $G$.
 
\end{enumerate}

\end{lemma}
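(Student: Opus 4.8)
The plan is to prove both parts by translating the very special property into the concrete statement (ii) about $K$-points of homogeneous varieties, and then chasing homogeneous varieties through the given exact sequence. Throughout I would fix a field extension $K/k$ and work with $G_K$, $N_K$, $Q_K$; since base change preserves the exactness of the sequence and the property of being very special is inherited by base change (as noted just before the lemma), it suffices to produce, for an arbitrary such $K$, the required $K$-rational points.

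For part (i), suppose $G$ is very special and let $Z$ be a $Q_K$-homogeneous $K$-variety. I would pull back the action along the quotient map $G_K \to Q_K$ to regard $Z$ as a $G_K$-variety. The key point is that $Z$ is then $G_K$-homogeneous: the surjectivity of $\id \times a_Q : Q_K \times Z \to Z \times Z$ forces surjectivity of $\id \times a_G : G_K \times Z \to Z \times Z$, since $G_K \to Q_K$ is surjective and composing with the projection recovers the $Q_K$-action map. Because $G$ is very special, $Z$ has a $K$-rational point, and this is exactly what is needed to conclude that $Q$ is very special. This direction is essentially formal.

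For part (ii), assume both $N$ and $Q$ are very special, let $Z$ be a $G_K$-homogeneous $K$-variety, and fix a $K$-point of the quotient question by working with the rational quotient of $Z$ by $N_K$. The strategy is to produce a $K$-point in two stages, first descending through $Q$ and then lifting through $N$. Concretely, I would form the rational quotient $g : Z \dasharrow Y$ of $Z$ by $N_K$ (Rosenlicht); the induced $Q_K$-action makes $Y$ a $Q_K$-homogeneous $K$-variety, so since $Q$ is very special, $Y$ has a $K$-rational point $y$. The generic fiber of $g$ over that point is then an $N_K$-homogeneous variety over $\kappa(y) = K$, and using that $N$ is very special, it too has a $K$-rational point, which lies in $Z$.

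The main obstacle will be making the two-stage argument precise at the level of homogeneous varieties rather than homogeneous spaces, since $Z$ need not carry a $K$-point a priori and the fibers of the quotient map are only defined up to the birational subtleties of Rosenlicht's theorem. The delicate checks are (a) that the $N_K$-quotient $Y$ really is $Q_K$-homogeneous in the sense of the surjectivity of $\id \times a$, and (b) that the relevant fiber of $g$ over the rational point $y$ is a genuine $N_K$-homogeneous $K$-variety (not just $\kappa(y)$-variety for a possibly larger residue field), so that very specialness of $N$ applies with the right field. I would handle (b) by arranging that $y$ is a $K$-point, so its residue field is exactly $K$, and by invoking the fact recorded in Section~\ref{sec:pp} that equivariant birational isomorphisms between homogeneous varieties are isomorphisms, which stabilizes the fiber up to canonical identification.
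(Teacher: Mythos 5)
Your part (i) is correct and is exactly the paper's argument: pull the action back along $G_K \to Q_K$ and note that surjectivity of $\id \times a$ for $Q_K$ forces it for $G_K$. Your part (ii) has the same two-stage skeleton as the paper (quotient by $N_K$, get a $K$-point downstairs since $Q$ is very special, then a $K$-point in the fiber since $N$ is very special), but the way you build the quotient leaves a genuine gap. Rosenlicht's theorem only gives a rational map $g : Z \dasharrow Y$, defined on an $N_K$-stable dense open $Z_0 \subset Z$, and $Y$ carries only a \emph{rational} $Q_K$-action. To invoke very-specialness of $Q$ you must first replace $Y$ by an honest $Q_K$-homogeneous variety $Y'$ (Weil regularization, then pass to the open orbit) --- your check (a), which you acknowledge but do not carry out. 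More seriously, the $K$-point $y$ you then obtain lives on $Y'$, which is related to $Y$ only by a birational map defined on unspecified dense opens; nothing guarantees that $y$ corresponds to a point of $g(Z_0)$, so ``the fiber of $g$ over $y$'' may be empty or simply undefined. Your proposed fix --- the rigidity fact that equivariant birational isomorphisms between homogeneous varieties are isomorphisms --- does not repair this: $Y$ with its merely rational action is not a homogeneous variety in the paper's sense, and rigidity requires regular actions on both sides, which is precisely what is missing.

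The paper sidesteps all of this by constructing an everywhere-defined quotient morphism rather than a rational one: if $Z$ has a $K$-point then $Z \simeq G_K/H$ and one takes $f : G_K/H \to G_K/(N_K \cdot H)$, whose target is visibly $Q_K$-homogeneous and whose fiber over any $K$-point is a genuine $N_K$-orbit, hence an $N_K$-homogeneous $K$-variety; the case of general $Z$ (no $K$-point) is then handled by Galois descent, using smoothness and quasi-projectivity of $Z$ to make the descent effective. If you want to salvage your route, the missing step is to show that the composite rational map $Z \dasharrow Y'$ is in fact a surjective \emph{morphism} --- for instance because its domain of definition is a $G_K$-stable dense open subset of the homogeneous variety $Z$, hence all of $Z$ --- and that its fibers over $K$-points are geometrically reduced $N_K$-orbits; neither statement is in your sketch. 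Two smaller points: ``the generic fiber of $g$ over that point'' conflates the generic fiber with the fiber over $y$; and you should record at the outset that $G$ is smooth (because $N$ and $Q$ are), since otherwise ``very special'' does not even apply to $G$.
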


\begin{proof}
(i) Just note that $Q$ is smooth and every $Q_K$-homogeneous
variety is homogeneous under the induced action of $G_K$.

(ii) Since $N$ and $Q$ are smooth, $G$ is smooth as well.
Let $K/k$ be a field extension, and $X$ a $G_K$-homogeneous
$K$-variety. Then there is a quotient $f: X \to Y = X/N_K$,
where $Y$ is a $Q_K$-homogeneous variety: indeed, if
$X$ has a $K$-rational point $x$, then $X \simeq G_K/H$
where $H := \Stab_{G_K}(x)$ and we may take for $f$ the
canonical morphism $G/H \to G/N_K \cdot H$. The case
of an arbitrary $G_K$-homogeneous variety $X$ follows from 
this by using Galois descent together with the 
smoothness and quasi-projectivity of $X$. 

Since $Q_K$ is very special, $Y$ has a $K$-rational point 
$y$. The fiber of $f$ at $y$ is a $K$-variety, homogeneous
under $N_K$. As the latter is very special, it follows
that this fiber has a $K$-rational point.
\end{proof}

Finally, note that a smooth commutative algebraic group
$G$ is very special if and only if for any field extension
$K/k$, every quotient of $G_K$ is special.

These observations yield a quick proof of the implication
(iii)$\Rightarrow$(ii): by Lemma \ref{lem:char} (ii), it suffices
to show that $\bG_a$ and $\bG_m$ are very special. Since 
these groups are commutative, it suffices in turn to show
that for any field extension $K/k$, every quotient of 
$\bG_{a,K}$ or $\bG_{m,K}$ is special. But every quotient
of $\bG_{a,K}$ is isomorphic to $\bG_{a,K}$ (see 
\cite[IV.2.1.1]{DG}), and likewise for $\bG_{m,K}$; moreover,
$\bG_a$ and $\bG_m$ are special. This yields the assertion.

One may show similarly that every $G_K$-homogeneous 
$K$-variety is rational, for any split solvable linear algebraic 
group $G$ and any field extension $K/k$.

We now start the proof of the implication (ii)$\Rightarrow$(iii)
with the following:

\begin{lemma}\label{lem:str}
Let $G$ be a very special algebraic group. Then $G$ is connected, 
linear and solvable.
\end{lemma}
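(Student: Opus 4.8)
The plan is to leverage two facts already in hand: by Lemma~\ref{lem:char}(i) every quotient of a very special group is again very special, and a very special group is \emph{special} in the sense of Serre (condition (ii) makes every torsor over a field trivial, which by \cite{RT} is exactly Serre's notion). Consequently every quotient of $G$ is special. It therefore suffices to rule out each obstruction to $G$ being connected, linear and solvable by producing, as a quotient of $G$, a group that is \emph{not} special. Throughout I would pass freely to $\bar{k}$: since very special is inherited under any field extension, $G_{\bar k}$ is very special, and the conclusions I want (affineness, solvability) either descend along $\bar k/k$ or are geometric, so the structure theorems of algebraic groups may be applied over the algebraically closed field $\bar k$.

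\textbf{Connectedness.} The component group $\pi_0(G) = G/G^0$ is a finite étale quotient of $G$, hence special. A nontrivial finite étale group scheme admits nontrivial torsors over a suitable field (after base change to $k_s$ it becomes a nontrivial finite constant group, which has a nontrivial cyclic quotient, and cyclic groups have nontrivial torsors over $k_s(t)$); equivalently, special groups are connected \cite{Serre}. Either way $\pi_0(G) = 1$, so $G$ is connected.

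\textbf{Linearity.} Working over $\bar{k}$, Chevalley's structure theorem yields an exact sequence $1 \to L \to G_{\bar k} \to A \to 1$ with $L$ connected affine and $A$ an abelian variety. Then $A$ is a very special, hence special, quotient. But a nontrivial abelian variety is not special: a Kummer-sequence computation shows $H^1(\bar{k}(t), A) \cong H^1(\bar{k}(t), A[n])$ is nonzero (as $A(\bar{k})$ is divisible), so $A$ has nontrivial torsors. Therefore $A = 0$, the group $G_{\bar k}$ is affine, and by faithfully flat descent $G$ is affine, hence linear.

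\textbf{Solvability and the main obstacle.} Again over $\bar k$, suppose $G_{\bar k}$ is not solvable. Then $S := G_{\bar k}/R(G_{\bar k})$ is a nontrivial semisimple group, and its adjoint quotient $S^{\ad} = S/Z(S)$ is a nontrivial \emph{adjoint} semisimple quotient of $G_{\bar k}$, hence very special and special. This is the crux: one must know that no nontrivial adjoint semisimple group is special. This follows from the Serre--Grothendieck classification of special groups, according to which the semisimple part of a special group is simply connected and a product of groups $\SL_1(D)$ and ${\rm Sp}_{2n}$, none of which is a nontrivial adjoint group; concretely, $H^1(K, S^{\ad})$ is nonzero over a field $K$ of transcendence degree $\ge 2$ over $\bar k$, giving the required nontrivial torsor. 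The contradiction forces $G_{\bar k}$, and hence $G$, to be solvable. I expect this last step to be the delicate one: the reductions to $\bar k$ must be justified by the stability of the very special property and by descent, and if a self-contained argument is preferred, the appeal to the classification should be replaced by a direct construction of a nontrivial $S^{\ad}$-torsor over a field of cohomological dimension at least two.
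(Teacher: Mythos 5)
Your proposal is correct and follows essentially the same route as the paper's proof: reduce to $\bar{k}$ (very special being stable under field extension), observe via Lemma~\ref{lem:char}(i) that every quotient of $G$ is special, get connectedness and linearity from Serre, and conclude solvability by showing the largest adjoint semisimple quotient is trivial using the Grothendieck classification of special semisimple groups as products of special linear and symplectic groups. The only difference is cosmetic: you sketch proofs of Serre's connectedness/linearity theorem (finite \'etale quotients have nontrivial torsors; Chevalley's theorem plus the Kummer argument for abelian varieties) where the paper simply cites \cite[Thm.~1]{Serre}.
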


\begin{proof}
As the assertions are invariant under field extensions, 
we may assume $k$ algebraically closed. Since $G$ is
special, it is connected and linear by \cite[Thm.~1]{Serre}.
Moreover, every quotient group of $G$ is special in view of
Lemma \ref{lem:char} (i). In particular, the largest
semisimple quotient $H$ of $G$ is special, as well as
the largest adjoint semisimple quotient $H/Z(H)$, where $Z(H)$
denotes the (scheme-theoretic) center. By a result of
Grothendieck (see \cite[Thm.~3]{Grothendieck}), 
the special semisimple groups are exactly the products of 
special linear groups and symplectic groups. In particular, 
every special adjoint semisimple group is trivial. Thus, 
so is $H/Z(H)$, and $G$ is solvable.
\end{proof}

\section{Very special tori}
\label{sec:vst}

Let $T$ be a torus. We denote by 
$M = \Hom_{k_s-\gp}(T_{k_s},\bG_{m,k_s})$ 
its character group; this is a free abelian group
of finite rank equipped with a continuous action
of the absolute Galois group $\Gamma = \Gamma_k$. 
By an unpublished result of Colliot-Th\'el\`ene (see 
\cite[Thm.~18]{Huruguen}; this result is implicitly
contained in \cite{CS}), $T$ is special if and only 
if the $\Gamma$-module $M$ is \emph{invertible}, i.e., 
a direct factor of a permutation $\Gamma$-module. From 
this, we derive a criterion for $T$ to be very special:

\begin{lemma}\label{lem:vst}
The following conditions are equivalent:

\begin{enumerate}

\item[{\rm (i)}] $T$ is very special.

\item[{\rm (ii)}] Every quotient group of $T$ is special.

\item[{\rm (iii)}] For any subgroup of finite index 
$\Gamma' \subset \Gamma$, every $\Gamma'$-submodule
$M' \subset M$ is invertible.

\end{enumerate}
 
\end{lemma}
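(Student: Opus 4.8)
The plan is to pass through the antiequivalence $T \mapsto M$ between $k$-tori and $\Gamma$-lattices, reformulate all three conditions as invertibility statements, and isolate one lattice-theoretic implication as the crux. Under this antiequivalence a quotient torus $T \twoheadrightarrow T'$ corresponds to the $\Gamma$-submodule $M' \subseteq M$ given by the character group of $T'$. The key point I would stress is that \emph{every} $\Gamma$-submodule $M' \subseteq M$ arises this way, not only the saturated ones: given $M'$, one divides $T$ by the (possibly non-smooth, e.g. $\mu_n$-type) diagonalizable subgroup scheme with character group $M/M'$, and $T/H$ is a torus with character group exactly $M'$. Combined with Colliot-Th\'el\`ene's criterion (a torus is special iff its character lattice is invertible), condition (ii) becomes: \emph{every $\Gamma$-submodule of $M$ is invertible}. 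Running the same analysis over an extension $K/k$, where $T_K$ has character group $M$ regarded as a $\Gamma_K$-module via $\Gamma_K \to \Gamma$, and invoking the earlier observation that a commutative group is very special iff every quotient of every base change $G_K$ is special, reformulates (i) as: \emph{for every field extension $K/k$, every $\Gamma_K$-submodule of $M$ is invertible}.

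Next I would exploit that the $\Gamma$-action on $M$ factors through a finite quotient $\Delta = \Gal(L/k)$, with $L$ a splitting field of $T$. For any $K/k$ the $\Gamma_K$-action factors through its image $H \subseteq \Delta$; since both the lattice of submodules and the invertibility property depend only on this finite image, and since every $H \le \Delta$ is realized by the finite separable extension $K = L^H$, the reformulation of (i) is literally (iii). This settles the routine implications at once: (i)$\Rightarrow$(iii) and (iii)$\Rightarrow$(i) via this splitting-field reduction, while (iii)$\Rightarrow$(ii) is the case $\Gamma' = \Gamma$ and (i)$\Rightarrow$(ii) the case $K = k$.

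The main obstacle is (ii)$\Rightarrow$(iii): upgrading ``all $\Delta$-submodules of $M$ are invertible'' to ``for every $H \le \Delta$, all $H$-submodules are invertible.'' A tempting reduction uses the criterion that an $H$-submodule $M'$ is $H$-invertible iff $\Ind_H^\Delta M'$ is $\Delta$-invertible (one direction because $\Ind$ carries permutation modules to permutation modules, the other because $M'$ is a direct summand of $\Res_H \Ind_H^\Delta M'$ by Mackey and $\Res$ preserves invertibility). The difficulty, and the reason the obvious functorial tricks fail, is that $\Ind_H^\Delta M'$ has strictly larger rank than $M$, so it is not a submodule of $M$ and hypothesis (ii) does not bear on it; the evaluation map $\Ind_H^\Delta M' \to M$ is far from injective. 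I therefore expect the genuine content to be a \emph{splitting} statement, namely that (ii) already forces $\Delta$ to act trivially on $M$, after which (iii) is immediate. Concretely, if the action were nontrivial I would choose a nontrivial $\bQ$-irreducible constituent $V$ of $M \otimes \bQ$, form the $\Delta$-submodule $N := M \cap V^{\mathrm{iso}}$, and try to show $N$ is not invertible by restricting to a suitable cyclic subgroup $C$ on which $V$ has no trivial constituent, recognizing $\Res_C N$ as a non-invertible cyclic lattice (for $C$ of prime order $p$ this is clean, since the invertible $C_p$-lattices are exactly the sums of $\bZ$ and $\bZ[C_p]$, so a nonzero $C_p$-lattice with no trivial constituent is non-invertible). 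Making this non-invertibility argument uniform, in particular selecting $C$ correctly and handling cyclic groups of composite order where the classification of invertible lattices is delicate, is the technical heart; this is where I would expect to rely on the Colliot-Th\'el\`ene--Sansuc theory of flasque and coflasque resolutions (or on Endo--Miyata) rather than on a bare-hands computation.
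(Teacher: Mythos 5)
Your lattice-theoretic reformulation is correct: every $\Gamma$-submodule $M'\subseteq M$ is indeed the character group of a quotient torus (divide by the diagonalizable subgroup dual to $M/M'$), so (ii) becomes ``all $\Gamma$-submodules of $M$ are $\Gamma$-invertible''; and your splitting-field argument identifying (i) with (iii) --- the action factors through a finite $\Delta=\Gal(L/k)$, and every $H\le\Delta$ is the image of $\Gamma_K$ for $K=L^H$ --- is in substance the paper's proof of (iii)$\Rightarrow$(i) together with the implication it actually uses in the other direction. The gap is at the step you yourself flag as the heart, (ii)$\Rightarrow$(iii), and it is worse than ``delicate for cyclic groups of composite order.'' Your construction needs a cyclic $C\le\Delta$ with $V^C=0$, and such a $C$ may simply not exist: take $\Delta=A_4$ acting on a lattice in its $3$-dimensional irreducible rational representation $V$ (e.g.\ the root lattice $\{x\in\bZ^4:\sum_i x_i=0\}$). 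Every element of $A_4$ has order $1,2$ or $3$ and acts with a nonzero fixed space on $V$ (the character values are $3,-1,0$), so no cyclic subgroup qualifies and your recipe produces nothing. This particular group can be rescued by a finer count: by the localization/Krull--Schmidt argument of Lemma \ref{lem:spl}, an invertible $C_p$-lattice has trivial rational multiplicity at least equal to the nontrivial one, equivalently $\chi_V(\sigma)\ge 0$, whereas an involution here has $\chi_V(\sigma)=-1$. (Note in passing that your parenthetical description of invertible $C_p$-lattices as exactly the sums of $\bZ$ and $\bZ C_p$ is false --- that is the content of Remark \ref{rem:ks} --- though the consequence you want does follow by localizing at $p$.) But even the repaired criterion fails in general: for the Mathieu group $M_{11}$ in its $2$-transitive action on $12$ points, every element of prime order fixes a point, so the irreducible degree-$11$ constituent $V$ of the permutation module has $\chi_V(\sigma)\ge 0$ for all prime-order $\sigma$ (the Fein--Kantor--Schacher phenomenon). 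There, restriction to prime-order subgroups yields no obstruction at all, and nothing in your sketch supplies the required non-invertible $\Delta$-submodule. So the crux of your proposal is unproved, and it is unclear that the statement you reduced it to is reachable by these tools.

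The comparison with the paper explains why no local patch will do: the paper never proves the lattice statement you are attempting. In its proof of (ii)$\Rightarrow$(iii) it passes to $K=k_s^{\Gamma'}$ and declares the quotient torus $T'$ of $T_K$ attached to $M'$ special ``by assumption''; since a quotient of $T_K$ is not in general a base change of a quotient of $T$, that step is really powered by condition (i) --- very special is stable under base change, and quotients of very special groups are special by Lemma \ref{lem:char} --- rather than by the bare condition (ii) over $k$. In effect the paper's working chain is (i)$\Rightarrow$(iii)$\Rightarrow$(i) plus the trivial implications, and only the equivalence (i)$\Leftrightarrow$(iii) is used afterwards. The genuinely hard splitting content is then handled in Lemma \ref{lem:spl} under hypothesis (iii), where one may choose $\Gamma'$ acting through $C_p$ and apply invertibility to submodules stable merely under $\Gamma'$ --- exactly the freedom that (ii) denies you, and what makes the prime-order analysis clean there. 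By insisting on deducing (iii) from (ii) over $k$ alone, you have set yourself a strictly stronger intermediate claim than anything established in the paper, and the $A_4$ and $M_{11}$ examples show that your proposed proof of that claim breaks.
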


\begin{proof}
(i)$\Rightarrow$(ii) This follows from Lemma \ref{lem:char}.

(ii)$\Rightarrow$(iii) The invariant subfield 
$K := k_s^{\Gamma'} \subset k_s$ is a finite
separable extension of $k$ with absolute Galois
group $\Gamma'$, and $T_K$ is 
the $K$-torus with character group $M$ viewed
as a $\Gamma'$-module. Moreover, the
$\Gamma'$-submodule $M'$ of $M$ corresponds
to a quotient torus $T'$ of $T_K$. By asumption,
$T'$ is special; thus, $M'$ is invertible.

(iii)$\Rightarrow$(i) Let $K/k$ be a field extension.
Then again, the character group of $T_K$ is $M$
equipped with its action of the absolute Galois group 
$\Gamma_K$. We claim that this action factors through
that of $\Gamma$.

To show this, note that the action of $\Gamma_K$
on $K_s$ stabilizes the subfields $k_s$ and $Kk_s$. 
This yields an exact sequence
\[ 1 \longrightarrow \Gal(K_s/K k_s) \longrightarrow
\Gamma_K = \Gal(K_s/K) \longrightarrow \Gal(Kk_s/K)
\longrightarrow 1.\]
Since $T_{k_s}$ is split, so is $T_{Kk_s}$ and hence
the action of $\Gamma_K$ on $M$ factors through
an action of $\Gal(Kk_s/K)$. The latter group may be identified
with a subgroup of $\Gal(k_s/k) = \Gamma$, proving 
the claim.

Every $T_K$-homogeneous $K$-variety $X$ is a torsor
under a quotient $T'$ of $T_K$, which in turn 
corresponds to a $\Gamma_K$-stable submodule of $M$.
By the claim and the assumption, it follows that
$T'$ is special, i.e., $X$ has a $K$-rational point.
\end{proof}

\begin{lemma}\label{lem:spl}
Let $T$ be a very special torus. Then $T$ is split. 
\end{lemma}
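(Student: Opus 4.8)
The plan is to deduce everything from the criterion of Lemma \ref{lem:vst}(iii), which holds since $T$ is very special, and to show that any \emph{nontrivial} Galois action on the character module $M$ produces a sublattice that fails to be invertible. First I would reduce to a finite group: the $\Gamma$-action on the finitely generated free $\bZ$-module $M$ is continuous, so it factors through a finite quotient; let $\Pi$ denote the image of $\Gamma$ in $\Aut(M)$, a finite group acting faithfully. Since $T$ is split exactly when this action is trivial, the goal becomes $\Pi = 1$. For a subgroup $\Pi' \le \Pi$ I would let $\Gamma' \subset \Gamma$ be its preimage, an open subgroup of finite index; any $\Gamma'$-submodule of $M$ has $\Gamma'$-action factoring through $\Pi'$, so Lemma \ref{lem:vst}(iii) becomes the statement that for every $\Pi' \le \Pi$, every $\Pi'$-submodule of $M$ is invertible as a $\Pi'$-module.

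A point needing verification in this reduction is that invertibility over the profinite group $\Gamma'$ and over its finite image $\Pi'$ coincide for a lattice whose action factors through $\Pi'$. One direction is clear, since a permutation $\Pi'$-module inflates to a permutation $\Gamma'$-module. For the other, I would take coinvariants under $\ker(\Gamma' \to \Pi')$: this carries a permutation $\Gamma'$-module to a permutation $\Pi'$-module, it is the identity on modules with action already factoring through $\Pi'$, and being a functor it preserves direct summands. Hence a direct summand of a permutation $\Gamma'$-module whose action factors through $\Pi'$ is a direct summand of a permutation $\Pi'$-module.

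Now I would argue by contradiction. Suppose $\Pi \ne 1$ and choose, by Cauchy's theorem, an element $\sigma \in \Pi$ of prime order $p$; set $C := \langle \sigma \rangle \cong \bZ/p\bZ$. As $\Pi$ acts faithfully, $\sigma$ acts nontrivially on $M$, so $M \otimes \bQ$ is not $C$-fixed and, decomposing it as a $\bQ[C]$-module, it contains a copy $V$ of the nontrivial irreducible $\bQ(\zeta_p)$, on which $\sigma$ acts as multiplication by a primitive $p$-th root of unity. Put $M' := M \cap V$; this is a $C$-stable sublattice of $M$, and clearing denominators against $M$ shows $M' \otimes \bQ = V$, so $M'$ is a full lattice in $V$. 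By the reduction above, $M'$ must be invertible as a $C$-module. The contradiction is then cohomological: the norm $N_C = 1 + \sigma + \dots + \sigma^{p-1}$ acts as $1 + \zeta_p + \dots + \zeta_p^{p-1} = 0$ on $V$, so the Tate group is $\hat{H}^{-1}(C, M') = M'/(\sigma-1)M'$; since $\sigma - 1$ is invertible on $V$ with $\det(\sigma - 1 \mid V) = \pm \Phi_p(1) = \pm p$, this quotient is finite of order $p$, hence nonzero. On the other hand, every permutation $C$-module is a sum of copies of $\bZ$ and $\bZ[C]$, both of which have $\hat{H}^{-1}(C, -) = 0$, so by additivity every invertible $C$-module has vanishing $\hat{H}^{-1}(C, -)$. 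This contradicts $\hat{H}^{-1}(C, M') \ne 0$, forcing $\Pi = 1$; that is, $T$ is split.

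I expect the genuinely delicate step to be the bookkeeping of the second paragraph — pinning down that "invertible" is the same notion over $\Gamma'$ and over the finite quotient $C$ — together with the clean identification of $\hat{H}^{-1}(C,-)$ as the obstruction that distinguishes the lattice $M'$ from permutation modules. Once those are in place, the decisive inequality is the one-line determinant computation $\det(\sigma - 1 \mid V) = \pm p$, which shows $\hat{H}^{-1}(C, M')$ has order exactly $p$.
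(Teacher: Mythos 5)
Your proof is correct, and it departs from the paper's at precisely the decisive step, so the comparison is worth recording. Both arguments share the same skeleton: reduce to a cyclic group $C$ of prime order $p$ acting nontrivially on $M$, form $M' := M \cap V$ where $V \simeq \bQ(\zeta_p)$ is the nontrivial irreducible $\bQ C$-module, note that $M'$ is invertible by Lemma \ref{lem:vst}, and derive a contradiction from the fact that $M'$ has rank $p-1$ and nontrivial $C$-action. But the obstruction invoked is genuinely different. The paper localizes at $(p)$ and uses the Krull--Schmidt theorem for $\bZ_{(p)}C$-modules \cite{Jones}, together with the indecomposability of $\bZ_{(p)}$ and $\bZ_{(p)}C$, to show that an invertible $C$-lattice not fixed pointwise must have rank at least $p$. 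You instead use Tate cohomology: $\hat{H}^{-1}(C,-)$ vanishes on $\bZ$ and on $\bZ C$, hence on every permutation $C$-module and every direct summand of one, whereas $\hat{H}^{-1}(C,M') = M'/(\sigma-1)M'$ is nonzero of order $\lvert \det(\sigma-1 \mid V)\rvert = \Phi_p(1) = p$; your identification of $\hat{H}^{-1}$ with the full coinvariants (because the norm kills $V$) and the determinant computation are both correct. What each approach buys: the paper's rank count rests on a nontrivial input from integral representation theory (Jones's theorem), and, as Remark \ref{rem:ks} stresses, the localization there cannot be avoided since Krull--Schmidt fails over $\bZ$; your cohomological argument sidesteps that issue entirely, is essentially self-contained, and is exactly the flasque-lattice viewpoint of Colliot-Th\'el\`ene--Sansuc \cite{CS} (invertible lattices are flasque, but $M'$ is not) --- fittingly, the paper's acknowledgements indicate that Colliot-Th\'el\`ene supplied an alternative proof avoiding classification-type input. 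A further merit of your write-up is the coinvariants argument in your second paragraph, identifying invertibility over the finite-index subgroup $\Gamma'$ with invertibility over its finite image: this makes explicit a reduction the paper performs silently, and it is correct since taking coinvariants under the kernel $K$ sends $\bZ[X]$ to $\bZ[X/K]$, fixes modules on which $K$ acts trivially, and, being additive, preserves direct summands.
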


\begin{proof}
It suffices to show that $\Gamma$ acts trivially
on $M$. Equivalently, for any subgroup 
$\Gamma' \subset \Gamma$ acting on $M$ via a quotient 
of prime order $p \geq 2$, the $\Gamma'$-action 
on $M$ is trivial.

Denote by $C_p$ the cyclic group of order $p$. 
By Lemma \ref{lem:vst}, the $C_p$-module $M$
is invertible, as well as any submodule $M'$. 
In particular, there exist a $C_p$-module $N$ 
and two integers $a,b \geq 0$ such that
\[ M' \oplus N \simeq \bZ^a \oplus (\bZ C_p)^b \]
as $C_p$-modules. By localizing at the prime ideal
$(p) \subset \bZ$, we obtain an isomorphism of
$\bZ_{(p)} C_p$-modules
\[ M'_{(p)} \oplus N_{(p)} \simeq 
\bZ_{(p)}^a \oplus (\bZ_{(p)} C_p)^b. \]
Moreover, $\bZ_{(p)}$ and $\bZ_{(p)} C_p$ are
indecomposable $\bZ_{(p)} C_p$-modules (this is
obvious for $\bZ_{(p)}$; for $\bZ_{(p)} C_p$,
one uses the isomorphism of $\bQ C_p$-modules
\[ \bQ C_p \simeq \bQ \oplus V, \]
where $\bQ$ is a trivial module and $V$ is an irreducible
non-trivial module of dimension $p-1$; moreover, this
isomorphism is not defined over $\bZ_{(p)}$).
As the Krull-Schmidt theorem holds for 
$\bZ_{(p)} C_p$-modules (see \cite[Thm.~2]{Jones}),
there exist integers $c, d \geq 0$ such that
\[ M'_{(p)}  \simeq \bZ_{(p)}^c \oplus (\bZ_{(p)} C_p)^d. \]
In particular, if $M'$ is not fixed pointwise by $C_p$, 
then its rank (as a $\bZ$-module) is at least $p$.

Consider the $\bQ C_p$-module 
$M_{\bQ} := M \otimes_{\bZ} \bQ$. If $M$ is
not fixed pointwise by $C_p$, then $M_{\bQ}$
contains a $\bQ C_p$-module $W$ isomorphic to $V$
(since every simple $\bQ C_p$-module is isomorphic
to $\bQ$ or $V$). Thus, $M' := M \cap W$ is a 
$C_p$-submodule of $M$, not fixed pointwise
by $C_p$ and of rank $p-1$; a contradiction. 
So $\Gamma$ acts trivially on $M$ as desired.
\end{proof}

\begin{remark}\label{rem:ks}
The localization argument in the above proof cannot
be avoided, since the Krull-Schmidt theorem generally fails 
for $C_p$-modules. More specifically, we may choose $p$ 
so that the ring $R$ has nontrivial class group, and choose 
a non-principal ideal $A \subset R$. Then $A$ is a summand
of a free $R$-module, but is not isomorphic to $R$.
\end{remark}

\section{Completion of the proof of Theorem \ref{thm:vs}}
\label{sec:compl}

It remains to show the implication (ii)$\Rightarrow$(iii). 
Let $G$ be a very special algebraic group,
and recall from Lemma \ref{lem:str} that $G$ 
is connected, linear and solvable. Choose a 
maximal torus $T$ of $G$; then $T_K$ is a maximal
torus of $G_K$ for any field extension $K/k$
(see \cite[Thm.~17.82]{Milne}).

If $k$ is perfect, then $G = U \rtimes T$,
where $U$ denotes the unipotent radical of $G$ 
(indeed, $G_{\bar{k}} = U_{\bar{k}} \rtimes T_{\bar{k}}$,
see e.g. \cite[Thm.~16.33]{Milne}). 
As a consequence, $T$ is a quotient group of $G$, 
and hence is very special by Lemma \ref{lem:char} (i). 
In view of Lemma \ref{lem:spl}, it follows that $T$ is split. 
But the smooth connected unipotent group $U$ is split 
as well (see e.g. \cite[Cor.~16.23]{Milne}), 
and hence so is $G$.

For an arbitrary field $k$, consider the derived subgroup 
$\cD(G)$; this is a smooth connected unipotent normal subgroup 
of $G$ (see e.g. \cite[Cor.~6.19, Prop.~16.34]{Milne}).
The quotient group $G/\cD(G)$ lies in a unique exact sequence
of commutative algebraic groups
\begin{equation}\label{eqn:ext} 
0 \longrightarrow M \longrightarrow G/\cD(G) 
\longrightarrow V \longrightarrow 0, 
\end{equation}
where $M$ is of multiplicative type and 
$V$ is unipotent; moreover, (\ref{eqn:ext})
splits uniquely over the perfect closure $k_i$ of $k$
(see e.g. \cite[Thm.~16.3]{Milne}). As a consequence, 
the natural morphism $T \to G/\cD(G)$ induces an
isomorphism $T \stackrel{\simeq}{\longrightarrow} M$. 
Also, $V$ is a quotient group of $G$, and hence 
is special. Since $V$ is unipotent, 
it is split by \cite[Thm.~1.1]{Nguyen}.
In view of \cite[Lem.~5.7]{Conrad}, it follows that
the exact sequence (\ref{eqn:ext}) has a unique splitting. 
Thus, we may identify $G/\cD(G)$ with $T \times V$.
In particular, $T$ is a quotient group of $G$.
Using Lemmas \ref{lem:char} and \ref{lem:spl},
it follows that $T$ is split. 

Denote by $U$ the pull-back of $V$ in $G$.
Then $U$ is a smooth connected unipotent group,
and $G \simeq U \rtimes T$. By \cite[Thm.~3.7]{Conrad},
$U$ has a largest split subgroup $U_{\spl}$; 
moreover, the formation of $U_{\spl}$ is compatible
with separable field extensions. As a consequence, 
$U_{\spl}$ is normal in $G$. Also, $T$ acts trivially
on $U/U_{\spl}$ in view of \cite[Cor.~4.4]{Conrad}. 
Thus, $G/U_{\spl} \simeq U/U_{\spl} \times T$.
In particular, $U/U_{\spl}$ is a quotient group of $G$, 
and hence is special. Using again 
\cite[Thm.~1.1]{Nguyen}, it follows that
$U$ is split, and hence so is $G$.

\begin{remark}\label{rem:final}
By inspecting the proof of Theorem \ref{thm:vs},
one may check that the conditions (i), (ii), (iii) 
are equivalent to

\begin{enumerate}
\item[{\rm (iv)}] 
{\it Every quotient group of $G$ is special}.
\end{enumerate}

If $G$ is linear, they are also equivalent to

\begin{enumerate}
\item[{\rm (i)'}] 
{\it The rational quotient map $V \dasharrow V/G$
has a rational section for any finite-dimensional
representation $G \to \GL(V)$}.
\end{enumerate}

Indeed, the implication (i)$\Rightarrow$(i') is obvious. 
We show that (i)'$\Rightarrow$(iv): let $Q$ be a quotient 
group of $G$, and choose a faithful finite-dimensional 
representation $\rho : Q \to \GL(W)$.
Then $G$ acts on $\GL(W)$ by right multiplication via $\rho$.
Moreover, $\GL(W)$ may be viewed as an open subset 
of $V := \End(W)$, stable by the linear representation of $G$ 
in $V$ by right multiplication via $\rho$ again. 
Thus, the rational quotient map $V \dasharrow V/G$
may be viewed as the $Q$-torsor $\GL(W) \to \GL(W)/Q$. 
By assumption, this torsor has a rational section,
and hence is locally trivial for the Zariski topology.
It follows that $Q$ is special by using 
\cite[Thm.~2]{Serre} (which is obtained over an
algebraically closed field, but whose proof holds
unchanged over an arbitrary field).
\end{remark}

\medskip \noindent
{\bf Acknowledgements.}
We thank Jean-Louis Colliot-Th\'el\`ene, Zinovy Reichstein
and an anonymous referee for helpful comments and suggestions. 
Our original proof of Lemma \ref{lem:vst} used the classification 
of indecomposable $C_p$-modules (see \cite[\S 74]{CR}). 
Colliot-Th\'el\`ene sent us an alternative proof, which avoids 
this classification. The proof presented here owes much to 
a suggestion of the referee. 
%Reichstein asked whether the above condition (i)' is equivalent 
%to (i), (ii), (iii).

\bibliographystyle{amsalpha}

\bigskip \bigskip

\bigskip \bigskip

\noindent
\address{Michel BRION~:
Universit\'e Grenoble Alpes, Institut Fourier, CS 40700,
38058 Grenoble Cedex 09.\\ 
T\'el. 04 76 51 42 98, Fax 04 76 51 44 78.\\
Courriel : Michel.Brion@univ-grenoble-alpes.fr}

\bigskip \bigskip

\noindent
\address{Emmanuel PEYRE~: 
Universit\'e Grenoble Alpes, Institut Fourier, CS 40700,
38058 Grenoble Cedex 09.\\ 
T\'el. 04 76 51 44 39, Fax 04 76 51 44 78.\\
Courriel : Emmanuel.Peyre@univ-grenoble-alpes.fr}

\end{document}